\newtheorem{theorem}{Theorem}[section]
\newtheorem{lemma}[theorem]{Lemma}
\theoremstyle{definition}
\theoremstyle{remark}
\numberwithin{equation}{section}
\begin{document}

\title[Solutions for autonomous semilinear elliptic equations]{Solutions for autonomous semilinear elliptic equations}

\author[Alexis Molino]{Alexis Molino}
\address{Alexis Molino\textsuperscript{1} ---
Departamento de Matem\'aticas, Facultad de Ciencias Experimentales, Universidad de Almer\'{\i}a, Ctra. de Sacramento sn. 04120, La Ca\~{n}ada de San Urbano, Almer\'{\i}a, Spain.}
\email{amolino@ual.es}

\author[Salvador Villegas]{Salvador Villegas}
\address{Salvador Villegas\textsuperscript{2} ---
 Departamento de An\'alisis Matem\'atico, Universidad de Granada, 18071 Granada, Spain. }
\email{svillega@ugr.es}

\begin{abstract}
We study existence of nontrivial solutions to problem
\begin{equation*}
\left\lbrace
\begin{array}{rcll}
-\Delta u &=& \lambda u+f(u)&\text{ in }\Omega,\\
u&=&0&\text{ on }\partial \Omega,
\end{array}\right.
\end{equation*}
where $\Omega \subset \mathbb{R}^N$ is a smooth bounded domain, $N\geq 1$, $\lambda \in \mathbb{R}$ and  $f:\mathbb{R}\to \mathbb{R}$ is any locally Lipschitz function with nonpositive primitive. \\ A complete description is obtained for $N=1$ and partial results for $N\geq 2$.
\end{abstract}

\renewcommand{\thefootnote}{\fnsymbol{footnote}} 
\footnotetext{\underline{\emph{MSC}}: 35J05, 35J15, 35J25.  \emph{Keywords:} Elliptic Equation, Dirichlet Problem, Existence.}     
\renewcommand{\thefootnote}{\arabic{footnote}} 

\maketitle
\section{Introduction}

In this paper we deal about the existence of nontrivial solutions for the following autonomous semilinear elliptic differential equation

\begin{equation}\label{problema}
\left\lbrace
\begin{array}{rcll}
-\Delta u &=& \lambda u+f(u)&\text{ in }\Omega,\\
u&=&0&\text{ on }\partial \Omega,
\end{array}\right.
 \tag{$P$}
\end{equation}
being $\Omega \subset \mathbb{R}^N$  a smooth bounded domain, $N\geq 1$, $\lambda \in \mathbb{R}$ and $f:\mathbb{R}\to \mathbb{R}$ a locally Lipschitz function satisfying

\begin{equation}\label{hipotesis}
F(s):=\int_0^s f(t)dt\leq 0 \mbox{ for every }s\in \mathbb{R}.
 \tag{$H$}
\end{equation}
Typical example are $f(u)=-\sin u$ or $f(u)=-u(u-1)(u-2)$

  A bounded weak solution (solution from now on) to problem \eqref{problema} is a function $u\in H_0^1(\Omega)\cap L^\infty(\Omega)$ such that
  \begin{equation*}
  \int_\Omega \nabla u \nabla \varphi = \lambda \int_\Omega u\varphi +\int_\Omega f(u)\varphi,\quad \textit{for all} \,\, \varphi \in H_0^1(\Omega).
  \end{equation*}
  
 Observe that, by regularity results, every solution is a classical solution to this problem.

 Note that hypothesis \eqref{hipotesis} implies that $u\equiv 0$ is always a solution to problem \eqref{problema}.  
   In \cite{lamadre} the authors establish the uniqueness of trivial solution for the case $\lambda=0$. Therefore, we are interested in existence of nontrivial solutions by adding the  term $\lambda u$ to the source data $f(u)$.
 As will be seen below, this will depend both on the dimension of the space and on the different values of
 $\lambda$. Besides, $\lambda_1$ (the first eigenvalue of the Laplace operator with Dirichlet boundary conditions on $\partial\Omega$) plays a crucial role. That is, an effect similar to that found in the well-known Br\'ezis-Nirenberg work \cite{BN}.
 
   However, as can be seen in the next section, partial results will be obtained, leaving open questions reflected in the last section.

 \section{Main results}
 
\begin{lemma}\label{general} Let $N\geq 1$. If $\lambda \leq 0$, then $u\equiv 0$ is the unique solution to \eqref{problema}, while for $\lambda\geq \lambda_1$ there exists $f$ satisfying \eqref{hipotesis} such that there exists a nontrivial solution to \eqref{problema}.
\end{lemma}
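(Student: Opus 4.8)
The plan is to treat the two assertions separately; both arguments are short, and the main point is to reduce each to something already available.

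For the uniqueness part, when $\lambda\le 0$ I would absorb the linear term into the nonlinearity. Given any solution $u$ of \eqref{problema}, set $g(s):=\lambda s+f(s)$. This is again locally Lipschitz, and its primitive is
\[
G(s)=\int_0^s g(t)\,dt=\frac{\lambda}{2}\,s^2+F(s)\le 0\qquad\text{for all }s\in\mathbb{R},
\]
because $\lambda\le 0$ and $F\le 0$ by \eqref{hipotesis}. Thus $u$ is a bounded weak (hence, by the quoted regularity, classical) solution of $-\Delta u=g(u)$ in $\Omega$ with $u=0$ on $\partial\Omega$, i.e.\ of \eqref{problema} with parameter $0$ and nonlinearity $g$; the uniqueness result of \cite{lamadre} then forces $u\equiv 0$. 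For $\lambda=0$ one has $g=f$ and this is exactly the statement quoted from \cite{lamadre}.

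For the existence part, when $\lambda\ge\lambda_1$ I would choose $f$ so that the zero-order term is cancelled down to the resonant equation. Let $\varphi_1$ be a first Dirichlet eigenfunction of $-\Delta$ on $\Omega$, so $-\Delta\varphi_1=\lambda_1\varphi_1$ in $\Omega$, $\varphi_1=0$ on $\partial\Omega$, and $\varphi_1\in C^\infty(\overline{\Omega})\subset H_0^1(\Omega)\cap L^\infty(\Omega)$ by elliptic regularity. Take $f(s):=(\lambda_1-\lambda)\,s$, which is (globally) Lipschitz with primitive $F(s)=\tfrac12(\lambda_1-\lambda)s^2\le 0$ since $\lambda\ge\lambda_1$, so \eqref{hipotesis} holds; for $\lambda=\lambda_1$ this is simply $f\equiv 0$. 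With this $f$, problem \eqref{problema} becomes $-\Delta u=\lambda u+(\lambda_1-\lambda)u=\lambda_1 u$ with zero boundary data, and therefore $u=\varphi_1$ is a nontrivial solution.

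I do not anticipate a genuine obstacle here; the only things needing a moment's care are that both endpoint values are admissible — $\lambda=0$ in the first part (covered directly by \cite{lamadre}, since then $g=f$) and $\lambda=\lambda_1$ in the second (covered because $f\equiv 0$ satisfies \eqref{hipotesis}) — and that the exhibited eigenfunction lies in the admissible class $H_0^1(\Omega)\cap L^\infty(\Omega)$. If one prefers a properly nonlinear example in the second part, the same cancellation idea works with $f(s)=(\lambda_1-\lambda)s+g_0(s)$ for any locally Lipschitz $g_0$ that vanishes on the range of $\varphi_1$ and is small enough that $F\le 0$ is preserved, but this refinement is not needed for the statement as given.
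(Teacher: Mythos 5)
Your proposal is correct and follows essentially the same route as the paper: for $\lambda\le 0$ you absorb the linear term into the nonlinearity (noting $\tfrac{\lambda}{2}s^2+F(s)\le 0$) and invoke the nonexistence result of \cite{lamadre}, and for $\lambda\ge\lambda_1$ you take $f(s)=(\lambda_1-\lambda)s$ with $u=\varphi_1$, exactly as the authors do. Your version merely spells out the details more explicitly; no gap.
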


\begin{proof}
Case $\lambda \leq 0$ is a fairly straightforward result of \cite{lamadre}, since the term $\lambda u+f(u)$ has nonpositive primitive. On the other hand, for case $\lambda\geq \lambda_1$ it is sufficient to take $f(u)=(\lambda_1-\lambda)u$ and observe that $u=\varphi_1$ (the first eigenfunction of the Laplace operator with Dirichlet boundary conditions on $\partial\Omega$) is a classical solution to \eqref{problema}.
\end{proof}
The previous lemma focuses the problem for values of  $\lambda\in (0,\lambda_1)$. Next, a partial result is obtained for dimensions greater than $2$.

\begin{theorem}\label{N>2}

Let $N\geq 3$ and $\Omega\subset\mathbb{R}^N$  a smooth bounded starshaped domain and $\lambda \leq \frac{N-2}{N}\lambda_1$.  Then, problem (\ref{problema}) has no nontrivial solutions.
\end{theorem}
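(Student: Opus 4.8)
The approach I would take is the classical Pohozaev identity combined with the variational characterization of $\lambda_1$. After a translation we may assume that $\Omega$ is starshaped with respect to the origin, so that $x\cdot\nu\ge 0$ on $\partial\Omega$, where $\nu$ is the outward unit normal. If $u$ solves \eqref{problema} then, by the regularity recalled in the Introduction, $u$ is classical (in fact $u\in C^{1,\alpha}(\overline{\Omega})$, since $f$ is locally Lipschitz), and multiplying $-\Delta u=\lambda u+f(u)$ by $x\cdot\nabla u$ and integrating by parts over $\Omega$ gives the Pohozaev identity
\begin{equation*}
\frac{N-2}{2}\int_\Omega|\nabla u|^2\,dx+\frac12\int_{\partial\Omega}(x\cdot\nu)\Big(\frac{\partial u}{\partial\nu}\Big)^2\,d\sigma=\frac{N\lambda}{2}\int_\Omega u^2\,dx+N\int_\Omega F(u)\,dx .
\end{equation*}
Since the boundary integrand is nonnegative and $F\le 0$ by \eqref{hipotesis}, discarding those two terms yields
\begin{equation*}
\frac{N-2}{2}\int_\Omega|\nabla u|^2\,dx\le\frac{N\lambda}{2}\int_\Omega u^2\,dx .
\end{equation*}

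If $\lambda\le 0$ (a case already contained in Lemma \ref{general}) the right-hand side is nonpositive, so $u\equiv 0$. For $0<\lambda\le\frac{N-2}{N}\lambda_1$ I would invoke the Poincar\'e inequality $\lambda_1\int_\Omega u^2\le\int_\Omega|\nabla u|^2$ to obtain
\begin{equation*}
\frac{N-2}{2}\int_\Omega|\nabla u|^2\,dx\le\frac{N\lambda}{2}\int_\Omega u^2\,dx\le\frac{N\lambda}{2\lambda_1}\int_\Omega|\nabla u|^2\,dx\le\frac{N-2}{2}\int_\Omega|\nabla u|^2\,dx .
\end{equation*}
If $\lambda<\frac{N-2}{N}\lambda_1$ then $\frac{N\lambda}{2\lambda_1}<\frac{N-2}{2}$, and the chain can hold only if $\int_\Omega|\nabla u|^2=0$, i.e. $u\equiv 0$.

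The borderline value $\lambda=\frac{N-2}{N}\lambda_1$ is the only delicate point and requires tracking the equality cases: the chain above must then be a string of equalities, so in particular $\int_\Omega u^2=\frac1{\lambda_1}\int_\Omega|\nabla u|^2$. Assume $u\not\equiv 0$. Then $u$ attains the Rayleigh quotient of $\lambda_1$, whence---by simplicity of the first eigenvalue---$u=c\varphi_1$ for some $c\neq 0$; in particular $-\Delta u=\lambda_1 u$ in $\Omega$ and $u$ has constant sign, say $u>0$, with $M:=\max_{\overline{\Omega}}u>0$. Comparing with \eqref{problema} gives $f(u(x))=(\lambda_1-\lambda)u(x)$ for every $x\in\Omega$, hence $f(s)=(\lambda_1-\lambda)s$ for all $s\in(0,M]$, and therefore $F(M)=\frac{\lambda_1-\lambda}{2}M^2>0$ because $\lambda_1-\lambda=\frac{2}{N}\lambda_1>0$ (here $N\ge 3$ is used). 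This contradicts \eqref{hipotesis}, so $u\equiv 0$ in this case too.

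Apart from the equality analysis, which is the step I expect to need the most care, the argument is routine; the two facts to be checked are that the regularity available for bounded weak solutions on the smooth domain $\Omega$ suffices to legitimize the Pohozaev identity, and the standard fact that equality in the Poincar\'e inequality on $H_0^1(\Omega)$ forces $u$ to be a multiple of $\varphi_1$.
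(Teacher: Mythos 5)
Your proof is correct, and it rests on the same two pillars as the paper's argument (the Pohozaev identity on a starshaped domain plus the Poincar\'e inequality), but you close the borderline case $\lambda=\frac{N-2}{N}\lambda_1$ by a genuinely different device. The paper never discards the term $N\int_\Omega F(u)$: it first shows that a nontrivial solution must have $\int_\Omega F(u)<0$ \emph{strictly} (if $\int_\Omega F(u)=0$ then $F\equiv 0$, hence $f\equiv 0$, on the range of $u$, so $-\Delta u=\lambda u$ and $\lambda$ would be a Dirichlet eigenvalue, impossible since $\lambda\le\frac{N-2}{N}\lambda_1<\lambda_1$), and this strictness gives $\lambda>\frac{N-2}{N}\lambda_1$ for every nontrivial solution in one stroke, with no case distinction or equality analysis. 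You instead drop $F\le 0$ immediately and are then forced to treat the endpoint through rigidity of the Poincar\'e inequality: equality makes $u$ a first eigenfunction (simplicity of $\lambda_1$ holds, a starshaped domain being connected), whence $f(s)=(\lambda_1-\lambda)s$ on the range of $u$ and $F$ takes a positive value, contradicting \eqref{hipotesis}. That argument is sound; two small points to tidy up: (i) you write ``say $u>0$'', but if $u=c\varphi_1$ with $c<0$ you should state the symmetric computation, $F(m)=\frac{\lambda_1-\lambda}{2}m^2>0$ with $m:=\min_{\overline\Omega}u<0$ --- or simply observe that for any nonzero value $s_0$ attained by $u$ one gets $F(s_0)=\frac{\lambda_1-\lambda}{2}s_0^2>0$, which shows you never really need constant sign or simplicity, only that equality in the Rayleigh quotient forces $-\Delta u=\lambda_1 u$; (ii) the regularity needed for Pohozaev is indeed available, since $f$ locally Lipschitz and $u$ bounded give classical regularity up to the boundary by bootstrap. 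In comparison, the paper's strict negativity of $\int_\Omega F(u)$ is the more economical route, while your equality-case analysis is slightly longer but makes explicit exactly what is excluded at the endpoint ($u$ proportional to $\varphi_1$ together with $F>0$ somewhere).
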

\begin{proof}
Without loss of generality we can assume, up to a translation, that   $\Omega$ is starshaped respect to the origin. Therefore  $x\cdot \nu(x)\geq 0$ on $\partial \Omega$, where $\nu$ denotes the unit  vector normal to $\partial \Omega$ pointing outwards.

By the well-known Poho\v{z}aev identity \cite{Pohozaev}, every solution $u$ to \eqref{problema} must satisfy
 \begin{align*}\label{Poho}
\frac{1}{2}\int_{\partial \Omega}|\nabla u(x)|^2\,x\cdot \nu(x)\,dx+\frac{N-2}{2}\int_{\Omega}|\nabla u(x)|^2\,dx  
\\
=\lambda\,\frac{ N}{2}\int_\Omega u(x)^2\,dx+N\int_\Omega F(u(x))\,dx,
\end{align*}

which implies
  \begin{equation*}
  \frac{N-2}{2}\int_{\Omega}|\nabla u(x)|^2\,dx\leq\lambda\,\frac{ N}{2}\int_\Omega u(x)^2\,dx+N\int_\Omega F(u(x))\,dx,
  \end{equation*}  
  and by using Poincar\'e inequality
  
   \begin{equation*}
 \lambda_1\, \frac{N-2}{2}\int_{\Omega}u(x)^2\,dx\leq\lambda\,\frac{ N}{2}\int_\Omega u(x)^2+N\int_\Omega F(u(x))\,dx.
  \end{equation*}

  We claim that $\int_\Omega F(u(x))\,dx<0$.  Conversely, since $f$ satisfies \eqref{hipotesis} and $F\circ u$ is continuous, then $F(u(x))=0$ for every $x\in \Omega$. Therefore $F(s)=0$ for every $s\in I=\{ u(x): x\in\Omega\}$. Since $u$ is nontrivial it follows that $I$ is a nontrivial interval and hence $f(s)=F'(s)=0$  for every $s\in I$. Thus $u$ satisfies $-\Delta u=\lambda u$ in $\Omega$ and consequently $\lambda$ is an eigenvalue of the Laplace operator with Dirichlet boundary conditions on $\partial\Omega$, contradicting hypothesis $\lambda \leq \frac{N-2}{N}\lambda_1$.
  
  Finally, from the above inequality it follows that

  \begin{equation*}
 \lambda_1\, \frac{N-2}{2}\int_{\Omega}u(x)^2\,dx<\lambda\,\frac{ N}{2}\int_\Omega u(x)^2\,dx.
  \end{equation*}
This leads to the fact that any nontrivial solution to the problem \eqref{problema} must satisfy
$$
\lambda >  \lambda_1\, \frac{N-2}{N}
$$
and the proof is concluded. 
\end{proof}

 \
 However, as can be seen in the following theorem, in dimension $1$ a drastic change occurs: the existence of nontrivial solutions for every positive $\lambda$.
 \begin{theorem}\label{N=1}

Let $\Omega=(a,b)\subset \mathbb{R}$ and set $\lambda>0$. Then, there exists $f\in C^\infty (\mathbb{R})$ satisfying (\ref{hipotesis}), such that problem (\ref{problema}) admits a nontrivial solution $u\in C^\infty [a,b]$.

\end{theorem}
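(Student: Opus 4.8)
The plan is to distinguish two ranges of $\lambda$, the threshold being the first Dirichlet eigenvalue $\lambda_1=\pi^2/(b-a)^2$. The case $\lambda\ge\lambda_1$ is already contained in Lemma \ref{general}: the function $f(u)=(\lambda_1-\lambda)u$ exhibited there lies in $C^\infty(\mathbb{R})$ and has primitive $F(s)=\tfrac{\lambda_1-\lambda}{2}s^2\le 0$, while the associated solution $u=\varphi_1$ can be taken to be $\sin\!\bigl(\pi(x-a)/(b-a)\bigr)\in C^\infty[a,b]$. So from now on I assume $0<\lambda<\lambda_1$.

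For such $\lambda$ I would look for the solution as an orbit of the autonomous equation $-u''=\lambda u+f(u)$ in $(a,b)$, taking $f=F'$ for a suitable $F\in C^\infty(\mathbb{R})$ with $F\le 0$ and $\operatorname{supp}F\subset(0,R)$ for a large amplitude $R>0$ to be fixed later; then $f=F'\in C^\infty(\mathbb{R})$ is compactly supported and $\int_0^s f=F(s)\le 0$, so \eqref{hipotesis} holds. Writing $G(s)=\tfrac{\lambda}{2}s^2+F(s)$, the energy $\tfrac12(u')^2+G(u)$ is constant along solutions. I will seek a solution that is symmetric about the midpoint $m=(a+b)/2$, equals $R$ there, and decreases strictly from $R$ to $0$ on $[m,b]$. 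Conservation of energy then forces the energy level $G(R)=\tfrac{\lambda}{2}R^2$ (recall $F\equiv 0$ near $R$), and such an orbit exists exactly when
\begin{equation*}
\tau \;:=\; \int_0^R \frac{ds}{\sqrt{\lambda(R^2-s^2)-2F(s)}} \;=\; \frac{b-a}{2}.
\end{equation*}
Since $G(s)\le\tfrac{\lambda}{2}s^2<G(R)$ on $[0,R)$ and $G'(R)=\lambda R>0$, the integral converges at $s=R$ and $R$ is a nondegenerate turning point.

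The crux is to choose $F$ so that $\tau=(b-a)/2$. I would put $F=-t\,\psi$, with $t\ge 0$ a parameter and $\psi\ge 0$ a fixed smooth bump supported on $[1,R-1]$ and positive on $(1,R-1)$, and analyse $\tau=\tau(t)$. The integrand is dominated by the fixed integrable function $1/\sqrt{\lambda(R^2-s^2)}$, so $\tau$ is continuous in $t$ by dominated convergence and clearly nonincreasing in $t$. At $t=0$ one computes $\tau(0)=\pi/(2\sqrt\lambda)$, which is strictly larger than $(b-a)/2$ precisely because $\lambda<\lambda_1$. On the other hand, splitting $\int_0^R$ over $[1.5,R-1.5]$ and its complement in $[0,R]$, the first piece is at most $(R-3)/\sqrt{2t}\to 0$ as $t\to\infty$, while the second is bounded, uniformly in $t$, by a quantity that tends to $0$ as $R\to\infty$. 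Hence, fixing first $R$ large and then $t$ large, $\tau(t)<(b-a)/2$, so by the intermediate value theorem there is $t^*>0$ with $\tau(t^*)=(b-a)/2$. Set $F=-t^*\psi$ and $f=F'$.

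Finally I would let $u$ be the maximal solution of $u''=-\lambda u-f(u)$ with $u(m)=R$, $u'(m)=0$. Because $G(s)\to+\infty$ as $|s|\to\infty$, energy conservation keeps $u$ bounded, so $u$ is global and $C^\infty$; autonomy and uniqueness give $u(a+b-x)=u(x)$; and on $(m,b)$ one has $u'=-\sqrt{2(G(R)-G(u))}<0$ as long as $u\in(0,R)$, so $u$ decreases strictly from $R$ and reaches $0$ after ``time'' $\tau(t^*)=(b-a)/2$, i.e. at $x=b$. Consequently $u(a)=u(b)=0$, $u>0$ on $(a,b)$, and $u\in C^\infty[a,b]$ is a nontrivial solution of \eqref{problema}, as required. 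The main obstacle is the third paragraph: one must ensure that a \emph{nonpositive} perturbation $F$ --- which can only \emph{shorten} the half-period --- can nevertheless bring it down to $(b-a)/2$; this is possible exactly because $\lambda<\lambda_1$, and it has to be achieved while keeping $F$ smooth, nonpositive, and compactly supported inside $(0,R)$.
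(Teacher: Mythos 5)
Your proposal is correct, and its skeleton is the same as the paper's: dispose of $\lambda\geq\lambda_1$ via Lemma \ref{general}, and for $0<\lambda<\lambda_1$ tune a parameter in a nonlinearity with nonpositive primitive so that the quadrature (time-map) integral for the autonomous ODE equals $(b-a)/2$, the key point in both arguments being that the unperturbed value $\pi/(2\sqrt{\lambda})$ exceeds $(b-a)/2$ exactly when $\lambda<\lambda_1$, after which continuity (dominated convergence) and the intermediate value theorem finish the matching. The difference is in the family of nonlinearities: the paper takes the explicit cubic $f(s)=-Ms(s-1)(2s-1)$, so $F(s)=-Ms^2(1-s)^2/2$, with the solution's amplitude fixed at $1$ and a single parameter $M$; its time map $\Phi(M)$ is monotone with limits $\pi/(2\sqrt{\lambda})$ and $0$ (Lemma \ref{laidea}), which gives a \emph{unique} $M$ and avoids your two-step choice. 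Your compactly supported bump $F=-t\psi$ at large amplitude $R$ requires the extra ``first $R$ large (to kill the endpoint contributions uniformly in $t$), then $t$ large'' argument, but in exchange yields an $f$ with compact support and a solution of prescribed large amplitude; both are perfectly valid. The final constructions are equivalent as well: the paper inverts the quadrature map $\Psi$ and extends by reflection, while you shoot from the midpoint and use autonomy plus uniqueness for the symmetry — the same energy identity underlies both.
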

For its proof, we will first state the following technical lemma:

\begin{lemma}\label{laidea}

Let $a<b$ and $\lambda$ real numbers satisfying $0<\lambda<\pi^2/(b-a)^2$. Then, there exists a unique $M>0$, depending on $\lambda$, such that 

$$\int_0^1 \frac{ds}{\sqrt{Ms^2(1-s)^2+\lambda (1-s^2)}}=\frac{b-a}{2}.$$

\end{lemma}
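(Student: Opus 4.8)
The plan is to study the function
$$G(M) := \int_0^1 \frac{ds}{\sqrt{Ms^2(1-s)^2 + \lambda(1-s^2)}}, \qquad M \geq 0,$$
and to show that it is a continuous, strictly decreasing map from $[0,\infty)$ onto the half-open interval $\left(0,\ \pi/(2\sqrt{\lambda})\,\right]$. The conclusion then follows from the intermediate value theorem, once one observes that the hypothesis $0<\lambda<\pi^2/(b-a)^2$ is equivalent to $\sqrt{\lambda}<\pi/(b-a)$, i.e. to $(b-a)/2 < \pi/(2\sqrt{\lambda}) = G(0)$, so that $(b-a)/2$ lies strictly inside the range of $G$.

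First I would check that $G$ is well defined. The only delicate point is the behaviour of the integrand near $s=1$, where both terms under the square root vanish. Since $Ms^2(1-s)^2\geq 0$ and $\lambda(1-s^2)=\lambda(1-s)(1+s)\geq \lambda(1-s)$ on $[0,1]$, the integrand is bounded above by $\bigl(\lambda(1-s)\bigr)^{-1/2}$ for every $M\geq 0$, and this majorant is integrable on $[0,1]$ and independent of $M$. Hence $G(M)<\infty$ for all $M\geq 0$, and the dominated convergence theorem also gives continuity of $M\mapsto G(M)$ on $[0,\infty)$.

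Next I would establish strict monotonicity and compute the two extreme values. If $0\leq M_1<M_2$, then for each $s\in(0,1)$ one has $M_1 s^2(1-s)^2+\lambda(1-s^2) < M_2 s^2(1-s)^2+\lambda(1-s^2)$, so the integrand at $M_1$ is strictly larger than the integrand at $M_2$ pointwise on $(0,1)$; integrating yields $G(M_1)>G(M_2)$. A direct computation gives
$$G(0)=\frac{1}{\sqrt{\lambda}}\int_0^1\frac{ds}{\sqrt{1-s^2}}=\frac{\pi}{2\sqrt{\lambda}},$$
and, using the same $M$-independent majorant, dominated convergence gives $\lim_{M\to\infty}G(M)=0$, since $Ms^2(1-s)^2\to\infty$ for every fixed $s\in(0,1)$.

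Combining these facts, $G$ is a continuous strictly decreasing bijection of $[0,\infty)$ onto $\left(0,\ \pi/(2\sqrt{\lambda})\,\right]$. As noted above, the hypothesis on $\lambda$ says exactly that $0<(b-a)/2<G(0)$, so there is a unique $M>0$ with $G(M)=(b-a)/2$; the value $M$ is necessarily positive because $G(0)\neq(b-a)/2$. I do not expect a serious obstacle here: the only step requiring a bit of care is producing the $M$-uniform integrable majorant near $s=1$ so that the continuity of $G$ and the limit as $M\to\infty$ are fully justified, and the elementary bound $\lambda(1-s^2)\geq\lambda(1-s)$ takes care of this.
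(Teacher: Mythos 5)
Your proposal is correct and follows essentially the same route as the paper: both study the integral as a function of the parameter $M$, use an $M$-uniform integrable majorant plus dominated convergence to get continuity and the limit $0$ as $M\to\infty$, identify the value $\pi/(2\sqrt{\lambda})$ at $M=0$ (the paper takes the limit as $M\to 0$), and conclude by monotonicity and the intermediate value theorem. Your explicit verification of strict monotonicity is a small welcome addition, since it is what guarantees uniqueness.
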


\begin{proof}

Define the function $\Phi:(0,+\infty)\to \mathbb{R}$ by

$$\Phi(k):=\int_0^1 \frac{ds}{\sqrt{k s^2(1-s)^2+\lambda (1-s^2)}}, \ \ \ k>0.$$

It is obvious that

$$\frac{1}{\sqrt{k s^2(1-s)^2+\lambda (1-s^2)}}\leq \frac{1}{\sqrt{\lambda (1-s^2)}}\in L^1(0,1) \mbox{  for every }k>0.$$

Hence, applying the Convergence Dominated Theorem we can assert that $\Phi$ is a well defined decreasing continuous function satisfying

$$\lim_{k\to +\infty}\Phi(k)=\int_0^1 0\, ds=0,$$

$$\lim_{k\to 0}\Phi(k)=\int_0^1 \frac{ds}{\sqrt{\lambda (1-s^2)}}=\frac{\pi}{2\sqrt{\lambda}}.$$

Applying  $0<\lambda<\pi^2/(b-a)^2$, we deduce that 

$$\frac{b-a}{2}\in \left(\lim_{k\to +\infty}\Phi(k), \lim_{k\to 0}\Phi(k)\right),$$

\noindent which  gives $\Phi(M)=(b-a)/2$ for a unique $M>0$, and the proof is complete.

\end{proof}

{\it Proof of Theorem \ref{N=1}}

\

If $\lambda\geq \lambda_1=\pi^2/(b-a)^2$, by Lemma \ref{general} there exists a nontrivial solution to problem \eqref{problema} for $f(s)=\left(\pi^2/(b-a)^2 -\lambda \right) s,  \,\,s\in \mathbb{R}$.

For the rest of the proof we will suppose that  $0<\lambda<\pi^2/(b-a)^2$.

Define the function $f:\mathbb{R}\to \mathbb{R}$ by

$$f(s)=-Ms(s-1)(2s-1), \ \ \ s\in \mathbb{R},$$

\noindent where $M>0$ is the value obtained in Lemma \ref{laidea}. It is immediate that $f\in C^\infty(\mathbb{R})$ and satisfies $F(s):=\int_0^s f(t)dt=-Ms^2(1-s)^2/2\leq 0$, for every $s\in\mathbb{R}$. 

  Now, we will construct a nontrivial solution $u\in C^\infty (a,b)$ to problem (\ref{problema}).

For this purpose; define $\Psi:[0,1]\to [a,(a+b)/2]$ by
$$\Psi(t):=a+\int_0^t \frac{ds}{\sqrt{Ms^2(1-s)^2+\lambda (1-s^2)}}, \ \ \ t\in [0,1].$$

Clearly $\Psi$ is a $C^\infty [0,1)\cap C[0,1]$ bijection between $[0,1]$ and $[a,(a+b)/2]$, with $\Psi'(t)=1/\sqrt{Mt^2(1-t)^2+\lambda (1-t^2)}>0$, $t\in [0,1).$

Therefore, we can define $u:=\Psi^{-1}$, obtaining $u\in C^\infty [a,(a+b)/2)\cap C[a,(a+b)/2]$ with
$$u'(x)=\sqrt{Mu(x)^2(1-u(x))^2+\lambda (1-u(x)^2)}, \ \ \  x\in [a,(a+b)/2).$$

Since $u\left( (a+b)/2\right)=1$, we obtain that $\lim_{x\to (a+b)/2}u'(x)=0$ and consequently $u\in C^1[a,(a+b)/2]$.

   Moreover, from the above expression we can assert that
$$u'(x)^2=Mu(x)^2(1-u(x))^2+\lambda (1-u(x)^2) , \ \ \  x\in [a,(a+b)/2),$$

\noindent and differentiating yields
$$2u'(x)u''(x)=u'(x)\left( 2M u(x)(u(x)-1)(2u(x)-1)-2\lambda u(x)\right), \ \ \ x\in [a,(a+b)/2).$$

Since $u'(x)\neq 0$ for $x\in  [a,(a+b)/2)$ we conclude that 
$$-u''=f(u)+\lambda u,\,\, x\in [a,(a+b)/2).$$

   In fact, the continuity of $f$ implies that there exists $\lim_{x\to (a+b)/2}u''(x)$, and consequently $u\in C^2[a,(a+b)/2]$.

Finally, we define $u$ for  $x\in((a+b)/2, b]$ by symmetry. That is, 
$$u(x):=u(a+b-x),\,\, x\in((a+b)/2, b].$$ 
Taking into account that $u(a)=0$, $u'((a+b)/2)=0$ and $u\in C^2[a,(a+b)/2]$. It follows that $-u''=f(u)+\lambda u$ in $[a,(a+b)/2]$. So,  we deduce that $u\in C^2[a,b]$ is a solution of problem (\ref{problema}). 

  Finally, since $f\in C^\infty (\mathbb{R})$ we conclude $u\in C^\infty [a,b]$ and the result follows.

\qed

\section{Summary and open problems}
Our results are summarized as follows. Uniqueness/non-uniqueness of trivial solution to problem \eqref{problema}. 

\begin{itemize}
\item $N=1$; Uniqueness for $\lambda\leq 0$ and non-uniqueness for $\lambda> 0$.
\item $N=2$; Uniqueness for $\lambda\leq 0$ and non-uniqueness for $\lambda \geq \lambda_1$.
\item $N\geq3$; Uniqueness for $\lambda\leq \lambda_1\frac{N-2}{N}$ (in case that the domain is starshaped) and non-uniqueness for $\lambda\geq \lambda_1$.
\end{itemize}

We finish our discussion with the following open questions:

\begin{enumerate}
\item Does \eqref{problema} admit a nontrivial solution if $N=2$ and $0<\lambda<\lambda_1$?
\item Does \eqref{problema} admit a nontrivial solution if $N\geq 3$ and $\lambda_1\frac{N-2}{N}<\lambda<\lambda_1$?
\end{enumerate}

\section*{Acknowledgements}
The authors are supported  by  supported by (MCIU) Ministerio de Ciencia, Innovaci\'on y Universidades, Agencia Estatal de
Investigaci\'on (AEI) and Fondo Europeo de Desarrollo Regional under Research Project
PID2021-122122NB-I00 (FEDER) and by Junta de Andaluc\'ia FQM-116 (Spain).

\

%
%
%
%

\end{document}